\tikzset{
    arrowMe/.style={
        postaction=decorate,
        decoration={
            markings,
            mark=at position .5 with {\arrow[thick]{#1}}
        }
    }
}
\tikzset{point/.style = {fill=black,circle,inner sep=0.7pt}}
\tikzstyle{vertex}=[circle,fill=black!25,minimum size=20pt,inner sep=0pt]
\tikzstyle{edge} = [draw]
\tikzstyle{weight} = [font=\tiny]
\newcommand{\R}{\ensuremath{\mathbb{R}}}
\newcommand{\Aff}{\ensuremath{\text{Aff}}}
\title{Construction of Exponential Families from Statistical Manifolds}
\author{Emmanuel Gnandi}
\address{
INSA de Toulouse, 
 	Département de G\'enie Mathématique, 
 	Université de Toulouse,
 	135 avenue de Rangueil
 	31077 Toulouse cedex 4
 	 France}
\email{kpanteemmanuel@gmail.com, gnandi@insa-toulouse.fr}
\newtheorem{thm}{Theorem}[section]
\newtheorem{prop}{Proposition}[section]
\newtheorem{rem}[thm]{Remark}
\newtheorem{defn}[thm]{Definition}
\numberwithin{equation}{section}
\newtheorem{disc}[thm]{Discussion}
\begin{document}

\begin{abstract}
We investigate the construction of exponential families from statistical manifolds, a central problem in information geometry. We prove that every compact statistical manifold admits a singular foliation whose leaves are Hessian manifolds. In particular, any non-flat, compact, orientable $3$-dimensional leaf arises as a quotient of an exponential family and has only odd Betti numbers. Our approach is constructive: we explicitly describe the foliation and analyze the geometric and topological properties of its leaves. We show that compact orientable leaves are either finite quotients of flat torus or mapping torus with periodic monodromy. In three dimensions, non-flat leaves admit a co-Kähler structure, which allows us to realize them as explicit exponential families parametrized by a Lorentz cone. These results establish a concrete bridge between abstract statistical manifolds and exponential families, highlighting deep connections between information geometry, differential geometry, and the topology of $3$-manifolds.
\end{abstract}

\maketitle

\section{Introduction}

Information geometry is a rapidly expanding field that reveals a deep interplay between differential geometry and statistical theory through the Fisher information metric (see \cite{amari2000methods, molitor2013exponential, amari2016information, lauritzen1987statistical, ay2017information}). Considerable attention has been devoted to the problem of defining meaningful distances between probability distributions, given their fundamental importance in a wide range of applications such as theoretical statistics, information theory, signal processing, quantitative finance, and machine learning. In this context, Rao~\cite{rao1945information} introduced the geodesic distance induced by the Fisher information metric, which is now commonly referred to as the Fisher--Rao distance (or Rao’s distance). The Fisher metric, also known as the information metric, is in general positive semi-definite, although it may be degenerate. \v{C}encov proved a fundamental uniqueness theorem for the Fisher information metric on finite sample spaces (\cite{cencov2000statistical}, p.~156). This result was later extended to infinite sample spaces in \cite{ay2015information}.  

The Fisher information metric is invariant under reparametrizations of the sample space $\Xi$ and covariant with respect to reparametrizations of the parameter space $\Theta$ (see \cite{calin2014geometric}). Owing to these properties, the Fisher metric has found widespread applications in various branches of geometry (see \cite{hitchin2006geometry,gnandi2022dual, fujiwara2020dually, groisser1997instantons, ciaglia2021focus, murray1993information, matsueda2017banados, naudts2009q, petz2007quantum, lesniewski1999monotone, facchi2010classical, barbaresco2014koszul}).  

Information geometry is also deeply connected with several mathematical frameworks, including Hessian geometry, Kähler geometry, complex geometry, special Kähler geometry, and complex analysis. One of the central problems in this area concerns the realization of a statistical manifold as a statistical model, namely, a manifold endowed with dual affine connections compatible with a Fisher metric. This problem was resolved by Hông Vân Lê in \cite{le2006statistical}. A further and significantly more difficult question is to characterize which classes of statistical manifolds can be realized as exponential families; this problem remains open. Despite its rapid progress, information geometry continues to present a number of important open questions (see \cite{sei2025open}).  

The remainder of this paper is organized as follows. Section~\ref{thm:Basic} recalls the necessary background material in information geometry. Section~\ref{thm:Result} is devoted to the proof of the main result.

\section{Basic Notions}
\label{thm:Basic}

This section introduces the fundamental structures studied throughout this work. For a comprehensive background and a collection of illustrative examples, we refer the reader to the following references:
\cite{amari2016information, amari2000methods, amari2012differential, amari2014curvature, ay2017information, calin2014geometric, molitor2013exponential, mehrshad2025statsitical, gnandi2022classification,gnandi2022dual,furuhata2017sasakian, molitor2025kahler, noguchi1992geometry, pal2024introduction, lauritzen1987statistical, cencov2000statistical, ay2002dually, zhang2007note, furuhata2016submanifold, matsuzoe2010statistical, takano2006statistical}.

\subsection{Statistical models}

\begin{defn}\cite{amari1987differential, amari2000methods, amari2012differential, amari2014curvature, amari2016information, ay2017information, calin2014geometric}
Let $\mathcal{S}$ be a family of probability distributions represented as
\[
\mathcal{S} = \{p(x, \theta) \mid \theta \in \Theta\},
\]
where $\Theta$ is a domain in $\mathbb{R}^n$ with Cartesian coordinates $(\theta^1, \theta^2, \dots, \theta^n)$, called the parameter space. Assume further that the mapping $\theta \mapsto p(\cdot, \theta)$ is bijective. Then $\mathcal{S}$ is called a statistical model determined by $p$.
\end{defn}
The Fisher information metric \(g^F\) on \(\mathcal{S}\) is defined in local coordinates \(\theta = (\theta^1, \dots, \theta^n)\) by \[ g^F_{ij}(\theta) = \mathbb{E}_{p(\cdot, \theta)} \left[ \frac{\partial \log p(x, \theta)}{\partial \theta^i} \, \frac{\partial \log p(x, \theta)}{\partial \theta^j} \right], \] where \(\mathbb{E}_{p(\cdot, \theta)}\) denotes expectation with respect to \(p(\cdot, \theta)\). We assume: \begin{enumerate}[label=(\roman*)] \item \(g^F_{ij}(\theta)\) is finite and smooth, \item the matrix \(\big( g^F_{ij}(\theta) \big)\) is positive definite for all \(\theta \in \Theta\). \end{enumerate} The corresponding Riemannian metric is \[ g^F = \sum_{i,j=1}^n g^F_{ij}(\theta) \, d\theta^i \otimes d\theta^j, \] defines a Riemannian metric on \(\mathcal{S}\), called the Fisher metric of \(\mathcal{S}\).\\ For $\alpha \in \mathbb{R}$, the $\alpha$-connection $\nabla^{(\alpha)}$ on a statistical model $\mathcal{S}$ is defined by \[ \begin{aligned} \Gamma^{(\alpha)}_{ijk}(\theta) &= \mathbb{E}_{p(\cdot, \theta)} \Bigg[ \Bigg( \frac{\partial^2 \log p(x, \theta)}{\partial \theta^i \partial \theta^j} + \frac{1 - \alpha}{2} \frac{\partial \log p(x, \theta)}{\partial \theta^i} \frac{\partial \log p(x, \theta)}{\partial \theta^j} \Bigg) \\ &\qquad\qquad \cdot \log p(x,\theta)\, \frac{\partial \log p(x, \theta)}{\partial \theta^k} \Bigg]. \end{aligned} \] In the literature, the family of $\alpha$-connections $\nabla^{(\alpha)}$ is commonly referred to as the Amari-Chentsov $\alpha$-connections or Amari--\v{C}encov $\alpha$-connections \cite{amari2000methods} . This terminology honors the foundational contributions of Shun-ichi Amari \cite{amari1987differential,amari2012differential} and Nikolai \v{C}encov\cite{cencov2000statistical} to information geometry. The Amari-Chentsov $\alpha$-connections provide a unified framework for studying the differential geometric structure of statistical models, with the Fisher metric as the central Riemannian structure and the $\alpha$-connections forming a dualistic structure on the statistical manifold. The $\alpha$-connection $\nabla^{(\alpha)}$ satisfies the following fundamental properties: \begin{enumerate} \item $\nabla^{(\alpha)}$ is torsion-free for all $\alpha \in \mathbb{R}$. \item $\nabla^{(0)}$ coincides with the \emph{Levi-Civita connection} of the Fisher metric. \end{enumerate} Among all $\alpha$-connections, the $\pm 1$-connections play a particularly important role in the geometric theory of statistical inference. $\nabla^{(1)}$ is called the \emph{exponential connection} (or \emph{e-connection}). $\nabla^{(-1)}$ is called the \emph{mixture connection} (or \emph{m-connection}). \vspace{0.2cm} It's well known that $\alpha$-connections, the following duality formula holds: \[ \partial_{\theta_k} g^F(\partial_{\theta_i}, \partial_{\theta_j}) = g^F\!\left( \nabla^{(\alpha)}_{\partial_{\theta_k}} \partial_{\theta_i},\, \partial_{\theta_j} \right) \;+\; g^F\!\left( \partial_{\theta_i},\, \nabla^{(-\alpha)}_{\partial_{\theta_k}} \partial_{\theta_j} \right). \] The connections $\nabla^{(\alpha)}$ and $\nabla^{(-\alpha)}$ are said to be dual (or conjugate) with respect to $g^F$. \vspace{0.2cm} For any $\alpha, \gamma \in \mathbb{R}$, the Christoffel symbols of the Amari-Chentsov connections $\nabla^{(\alpha)}$ and $\nabla^{(\gamma)}$ are related by: \[ \Gamma^{(\gamma)}_{ij,k} = \Gamma^{(\alpha)}_{ij,k} + \frac{\alpha - \gamma}{2} \, \text{T}^F_{ijk}, \] where \[ \text{T}^F_{ijk}(\theta) = \mathbb{E}_{p(\cdot, \theta)}\left[ \partial_i \ell_\theta \, \partial_j \ell_\theta \, \partial_k \ell_\theta \right], \quad \text{with } \ell_\theta(x) = \log p(x, \theta). \] The fully symmetric $(0,3)$-tensor field $\text{T}^F$ defined by the components $\text{T}^F_{ijk}$ is called the cubic form or Amari-Chentsov tensor. Moreover, the covariant derivative of the Fisher metric with respect to $\nabla^{(\alpha)}$ satisfies: \[ (\nabla^{(\alpha)}_X g^F)(Y, Z) = \alpha \, \text{T}^F(X, Y, Z). \] An important and widely studied class of statistical models is the exponential family, consisting of probability densities of the form \[ p(x, \theta) = \exp\left[ C(x) + \sum_{i=1}^n \theta^i F_i(x) - \psi(\theta) \right], \quad \theta \in \Theta, \] where $C(x), F_1(x), \dots, F_n(x)$ are smooth functions of the observation $x$, and $\psi$ is the cumulant generating function on the parameter space $\Theta$. The coordinates $\theta = (\theta^1, \dots, \theta^n)$ are called natural parameters, and $\psi$ is determined by the normalization condition: \[ \psi(\theta) = \log \int \exp\left[ C(x) + \sum_{i=1}^n \theta^i F_i(x) \right] dx. \] A particularly important special case is the normal distribution, obtained by taking \[ C(x) = 0, \quad F_1(x) = x, \quad F_2(x) = x^2, \quad \theta^1 = \frac{\mu}{\sigma^2}, \quad \theta^2 = -\frac{1}{2\sigma^2}, \] which yields \[ \psi(\theta^1, \theta^2) = -\frac{(\theta^1)^2}{4\theta^2} + \frac{1}{2} \log\left( -\frac{\pi}{\theta^2} \right). \] This corresponds exactly to the Gaussian distribution $\mathcal{N}(\mu, \sigma)$. For an exponential family with natural parameters $\theta = (\theta^1, \dots, \theta^n)$, the Fisher metric and the cubic form are given in terms of the cumulant generating function $\psi(\theta)$ by: \[ g^F_{ij}(\theta) = \partial_i \partial_j \psi(\theta), \quad \text{T}^F_{ijk}(\theta) = \partial_i \partial_j \partial_k \psi(\theta), \] where $\partial_i = \frac{\partial}{\partial \theta^i}$. \subsection{Statistical manifold} Building upon the geometric insights developed by Amari and Chentsov, Steffen L. Lauritzen provided a rigorous formalization of the notion of a statistical manifold. \begin{defn}[Lauritzen \cite{lauritzen1987statistical}] A statistical manifold is a quadruple $(M, g, \nabla, \nabla^*)$ where: \begin{enumerate} \item For all $X, Y, Z \in \mathfrak{X}(M)$, \[ X(g(Y, Z)) = g(\nabla_X Y, Z) + g(Y, \nabla^*_X Z). \] \item Both $\nabla$ and $\nabla^*$ are torsion-free affine connections. \end{enumerate} Equivalently, a statistical manifold may be defined as a triplet $(M, g,\nabla)$ such that $\text{T}= \nabla g$ is a totally symmetric $(0,3)$-tensor field. \end{defn} For a statistical manifold $(M, g, \nabla, \nabla^{*})$, the triple $(g, \nabla, \nabla^{*})$ is called a statistical structure, and the pair $(\nabla, \nabla^{*})$ are referred to as statistical connections. The Levi--Civita connection $\nabla^{\mathrm{LC}}$ satisfies \begin{equation} 2\, \nabla^{\mathrm{LC}} = \nabla + \nabla^{*}. \end{equation} Note that the $(0,3)$-tensor field $\text{T}$ is referred to by Lauritzen \cite{lauritzen1987statistical} as the skewness tensor.\\ The following construction provides a fundamental example of how statistical structures naturally arise on Riemannian manifolds equipped with a distinguished vector field. Let $(M, g)$ be a Riemannian manifold with Levi–Civita connection $\nabla^{\mathrm{LC}}$, and let $\xi \in TM \setminus \{0\}$ be a non-vanishing vector field. For all $X, Y \in \mathfrak{X}(M)$, define 
\begin{align} \nabla_X Y &= \nabla^{\mathrm{LC}}_X Y + g(X, \xi) g(Y, \xi) \, \xi, \label{eq:nabla-def} \\ \nabla^*_X Y &= \nabla^{\mathrm{LC}}_X Y - g(X, \xi) g(Y, \xi) \, \xi.
\end{align} 

Then $(M, g, \nabla, \nabla^*)$ is a statistical manifold. Starting from our previous construction, on an almost contact metric manifold \((M^{2d+1},\phi,g,\alpha,\xi)\) with \(\nabla^{LC}\) the Levi--Civita connection of the metric \(g\) (see \cite{blair2010riemannian} for exposition on almost contact metric geometry), we can construct the following one-parameter families of statistical connections: 
\begin{align} \nabla^{\varepsilon} &= \nabla^{LC} + \varepsilon\,\alpha(\cdot)\alpha(\cdot)\xi, \qquad \forall \varepsilon\in\mathbb{R},\\ \nabla^{-\varepsilon} &= \nabla^{LC} - \varepsilon\,\alpha(\cdot)\alpha(\cdot)\xi, \qquad \forall \varepsilon\in\mathbb{R}. 
\end{align}\\For a statistical manifold $(M, g, \nabla, \nabla^*)$, the connection $\nabla$ is flat if and only if $\nabla^*$ is flat. In this case, we say that $(M, g, \nabla, \nabla^*)$ is a dually flat manifold(\cite{amari2016information}). It was shown in \cite{ay2002dually} that any compact dually flat manifold necessarily has an infinite fundamental group. Consequently, the sphere \(\mathbb{S}^{q}\), for \(q > 1\), cannot be a dually flat manifold. \subsection{Hessian manifold} Hessian manifolds exhibit particularly rich topological and geometric structures and share a deep relationship with Kähler manifolds. This relationship is especially manifested in the fact that the tangent bundle of any Hessian manifold is naturally endowed with a Kähler metric induced by the Hessian structure itself~\cite{shima2007geometry,dombrowski1962geometry,satoh2019almost}. Such manifolds occupy a central place in information geometry, Kähler geometry, special Kähler geometry, and complex analysis. They also play an important role in various applied fields, including convex optimization, statistics, combinatorics, Souriau's thermodynamics on Lie groups\cite{souriau1997,barbaresco2015koszul}. \begin{defn} A locally flat manifold is a pair $(M, \nabla)$, where $\nabla$ is a connection whose torsion and curvature tensors vanish identically: \[ T^\nabla = 0, \qquad R^\nabla = 0. \] Such a connection $\nabla$ is called a locally flat connection, and $(M, \nabla)$ is referred to as a locally flat manifold. \end{defn} \begin{defn} \label{def:affine_structure} An affinely flat structure on an $m$-dimensional manifold $M$ is a complete atlas \begin{equation} \mathcal{A} = \{ (U_i,\Phi_i) \}, \end{equation} such that the transition maps $\Phi_i^{-1} \circ \Phi_j$ belong to the group $\Aff(\R^n)$ of affine transformations on $\mathbb{R}^m$. \end{defn} It's important to note that a manifold admits an affine structure if and only if it is locally flat. \begin{defn}[\cite{shima1997geometry, shima2007geometry}] A Riemannian metric $g$ on a locally flat manifold $(M, \nabla)$ is called a Hessian metric if, locally, there exists a smooth function $f$ such that \[ g = \nabla^2 f, \quad \text{i.e.,} \quad g_{ij} = \frac{\partial^2 f}{\partial x_i \partial x_j}, \] where $(x_1, \dots, x_n)$ is a system of affine coordinates with respect to $\nabla$. A locally flat manifold $(M, \nabla)$ equipped with a Hessian metric $g$ is called a Hessian manifold and is denoted by $(M, g, \nabla)$. \end{defn} Let $D$ denote the Levi-Civita connection of $(M,g)$, and define \[ \gamma = D - \nabla. \] Since both $\nabla$ and $D$ are torsion-free, we have \[ \gamma_X Y = \gamma_Y X, \quad \text{for all vector fields } X,Y. \] Moreover, in affine coordinates, the components $\gamma^{i}_{jk}$ of $\gamma$ coincide with the Christoffel symbols $\Gamma^{i}_{jk}$ of $D$. \begin{prop}[\cite{shima1997geometry}] Let $(M, \nabla)$ be an locally flat manifold and $g$ a Riemannian metric. The following are equivalent: \begin{enumerate} \item $g$ is a Hessian metric; \item $(\nabla_X g)(Y,Z) = (\nabla_Y g)(X,Z)$ for all vector fields $X,Y,Z$; \item In affine coordinates $(x^i)$, $g_{ij}$ satisfies \[ \frac{\partial g_{ij}}{\partial x^k} = \frac{\partial g_{kj}}{\partial x^i}; \] \item $g(\gamma_X Y, Z) = g(Y, \gamma_X Z)$ for all vector fields $X,Y,Z$; \item $\gamma_{ijk} = \gamma_{jik}$. \end{enumerate} \end{prop}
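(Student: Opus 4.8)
The plan is to fix, on a sufficiently small coordinate ball, a system of $\nabla$-affine coordinates $(x^1,\dots,x^n)$; such coordinates exist because $(M,\nabla)$ is locally flat, and in them the connection coefficients of $\nabla$ vanish, so $\nabla_{\partial_i}\partial_j=0$. The whole argument rests on translating each of the five conditions, in these coordinates, into the single identity appearing in (3), namely $\partial_k g_{ij}=\partial_i g_{kj}$ for all $i,j,k$. It therefore suffices to establish $(1)\Leftrightarrow(3)$, $(2)\Leftrightarrow(3)$, and $(3)\Leftrightarrow(4)\Leftrightarrow(5)$, and then chain the equivalences through (3).

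First I would do $(1)\Rightarrow(3)$: if $g_{ij}=\partial_i\partial_j f$ on the ball, then $\partial_k g_{ij}=\partial_i\partial_j\partial_k f$ is totally symmetric, which is exactly (3). The converse $(3)\Rightarrow(1)$ is the one step I expect to require more than bookkeeping, and I would prove it with two applications of the Poincaré lemma on the contractible coordinate ball. For each fixed $j$, condition (3) says precisely that the $1$-form $\omega_j:=\sum_i g_{ij}\,dx^i$ is closed, hence $\omega_j=dh_j$ for some function $h_j$, i.e. $g_{ij}=\partial_i h_j$. Then the symmetry $g_{ij}=g_{ji}$ forces $\partial_i h_j=\partial_j h_i$, so $\sum_j h_j\,dx^j$ is closed and equals $df$ for some $f$; hence $g_{ij}=\partial_i h_j=\partial_i\partial_j f$. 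It is worth noting that this produces only a local potential, which is exactly what the definition of a Hessian metric asks for, so there is no global obstruction to address here.

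For $(2)\Leftrightarrow(3)$: in affine coordinates $(\nabla_{\partial_k}g)(\partial_i,\partial_j)=\partial_k g_{ij}$ since the $\nabla$-coefficients vanish, and because both sides of the identity in (2) are $(0,3)$-tensors it is enough to evaluate on the coordinate frame, which turns (2) into $\partial_k g_{ij}=\partial_i g_{kj}$. For the final block, set $\gamma=D-\nabla$ as in the text; $\gamma$ is symmetric in its lower indices because $\nabla$ and $D$ are torsion-free, and in affine coordinates $\gamma^l_{jk}$ are the Christoffel symbols of the Levi--Civita connection $D$ of $g$. The Koszul formula, written in these coordinates, reads $g(\gamma_{\partial_i}\partial_j,\partial_k)=\tfrac12\!\left(\partial_i g_{jk}+\partial_j g_{ik}-\partial_k g_{ij}\right)$, and a one-line computation then gives $g(\gamma_{\partial_i}\partial_j,\partial_k)-g(\partial_j,\gamma_{\partial_i}\partial_k)=\partial_j g_{ik}-\partial_k g_{ij}$. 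Hence (4), which asserts that this difference vanishes for all coordinate vector fields (and is tensorial, so the coordinate check suffices), is equivalent to (3). Finally, (5) is merely the index form of (4): combining the automatic lower-index symmetry $\gamma_{ijk}=\gamma_{ikj}$ with the transposition asserted in (4) is the same as $\gamma_{ijk}=\gamma_{jik}$, which also amounts to full symmetry of $\gamma_{ijk}$. This completes the plan.
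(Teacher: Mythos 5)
Your proof is correct. The paper gives no proof of this proposition --- it is quoted from Shima's book --- and your argument (reducing every condition in affine coordinates to the total symmetry of $\partial_k g_{ij}$, with the double Poincar\'e lemma supplying the local potential for $(3)\Rightarrow(1)$ and the Koszul formula handling $(3)\Leftrightarrow(4)\Leftrightarrow(5)$) is precisely the standard one found in that reference.
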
 On an orientable Hessian manifold $(M,g,\nabla)$, using the connection $\nabla$ and the volume element induced by $g$, Koszul introduced a closed 1-form, which Shima~\cite{shima2007geometry} refers to as the first Koszul form, associated with the Hessian structure $(\nabla, g)$. \begin{defn}[\cite{koszul1961domaines, shima1997geometry}] The first Koszul form $\beta$ associated with $(\nabla, g)$ is the $1$-form defined by \[ \nabla_X (\mathrm{Vol}_g) = \beta(X) \, \mathrm{Vol}_g, \quad X \in \mathfrak{X}(M), \] where $\mathrm{Vol}_g$ denotes the Riemannian volume form induced by $g$. \end{defn} Directly from the definition, \[ \beta(X) = \operatorname{Tr}(\gamma_X), \] and in affine coordinates, \[ \beta_i = \frac{1}{2} \frac{\partial \log \det(g_{pq})}{\partial x_i} = \gamma^k_{ki}. \] We are now able to present the main theorem. 
\section{Main Results}
\label{thm:Result}
Obviously, a statistical model is a statistical manifold. Lauritzen and Amari raised the following fundamental question: can every statistical manifold be realized as a finite-dimensional statistical model ? Constructing a realization of a statistical manifold as a statistical model, i.e., a manifold with dual connections with respect to a Fisher metric, is an important question in information geometry. While a positive answer to this problem was given by the work of Hông Vân Lê~\cite{le2006statistical}, explicitly constructing the probability family that gives rise to the Fisher metric remains a challenging task in general.

\begin{thm}\cite{le2006statistical}
Every smooth compact statistical manifold $(M^n, g, T)$ of finite dimension  
admits, for a sufficiently large integer $N$, an isostatistical embedding  
(i.e., an embedding that preserves both the tensor $T$ and the metric $g$) into $\mathrm{Cap}^{N}_{+}$, the space of all positive probability distributions defined on a discrete sample space $\Omega^{N}$ consisting of $N$-elementary events, where
\[
\mathrm{Cap}^{N}_{+} := \left\{ (p_1, \ldots, p_n) \;\middle|\; p_i > 0 \text{ for } i = 1, \ldots, n
\;\text{and}\; \sum_{i=1}^n p_i = 1 \right\}.
\]
\end{thm}

This result shows that compact statistical manifolds of finite dimension 
can indeed be realized as statistical models, although it does not provide 
an explicit form for such an embedding.

A major challenge in information geometry is to identify which classes of statistical manifolds $(M, g, T)$ arise from an exponential family. 
To date, to the best of our knowledge, this problem remains unsolved.
\vspace{0.2cm}

We can now state the main result of this paper.
\begin{thm}
\label{thm:statistical_foliation}
On any compact statistical manifold, there exists a singular foliation whose leaves are Hessian manifolds. Every compact, orientable leaf is either a finite quotient of a flat torus or a mapping torus with a periodic gluing map. If such a leaf is three-dimensional and does not admit any flat metric, then all of its Betti numbers are odd, and it arises as a quotient of an exponential family.
\end{thm}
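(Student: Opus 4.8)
The construction I would pursue produces, canonically from the statistical structure, a singular distribution $\mathcal D$ on $M$ whose integral leaves are dually flat — so, by the Proposition of \S\ref{thm:Basic}, Hessian. The natural candidates for $\mathcal D$ are built from the skewness tensor $T=\nabla g$ — its pointwise nullity $\{v\in T_{p}M:\iota_{v}T=0\}$, or the image of the curvature operator $R^{\nabla}$ — or from Le's isostatistical embedding $M\hookrightarrow\mathrm{Cap}_{N}$ into the dually flat space of densities, the candidate that would feed most directly into the last assertion. Whichever is used, $\dim\mathcal D_{p}$ is semicontinuous, so $\mathcal D$ is a genuine distribution on an open dense set and degenerates on its complement $Z$ — hence ``singular''; involutivity I expect from the structure equations of $\nabla$ (using total symmetry of $\nabla g$), and Frobenius together with the Stefan--Sussmann theorem then produces the foliation $\mathcal F$. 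On a leaf $L$ one equips $g_{L}=g|_{L}$ and the tangentially projected connection $\nabla^{L}$; the statistical identities restrict, so $(L,g_{L},\nabla^{L})$ is a statistical manifold, and it is Hessian exactly when $\nabla^{L}$ is flat. Establishing $R^{\nabla^{L}}=0$ is the heart of the matter: writing $\nabla=\nabla^{\mathrm{LC}}+K$ with $K$ the $g$-self-adjoint difference tensor and expanding $R^{\nabla}$, one wants the curvature of $\nabla$ to live transversally to $\mathcal F$ — the mechanism being the cancellation of the quadratic $K$-terms, visible already for the rank-one model \eqref{eq:nabla-def}, where the surviving curvature terms on leaf-tangent vectors are multiples of $\xi$.

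\textbf{Step 2: compact orientable leaves.} Let $L$ be compact and orientable. A Hessian manifold has flat $\nabla^{L}$, hence flat $g_{L}$-conjugate connection, so $L$ is compact and dually flat; by \cite{ay2002dually}, $\pi_{1}(L)$ is infinite. Its first Koszul form $\beta$ is closed. If $[\beta]=0\in H^{1}(L;\R)$, Shima's vanishing theorem for compact Hessian manifolds forces $g_{L}$ to be a flat Riemannian metric, so $L$ is a compact flat manifold and Bieberbach's theorem presents it as a finite quotient of a flat torus. If $[\beta]\neq 0$, then $L$ carries a nowhere-vanishing closed $1$-form, so by Tischler's theorem $L$ fibres over $S^{1}$: it is a mapping torus $L=F\times_{\phi}S^{1}$ whose fibre $F$ inherits a Hessian structure. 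The gluing map $\phi$ preserves that structure, so $\phi\in\operatorname{Iso}(F,g_{F})\cap\operatorname{Aff}(F,\nabla^{F})$; the first group is compact, the second discrete, so $\langle\phi\rangle$ is finite — $\phi$ is periodic.

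\textbf{Step 3: the three-dimensional non-flat leaf.} Let $L$ be $3$-dimensional, compact, orientable, non-flat. Then $[\beta]\neq 0$ (otherwise $g_{L}$ would be flat by Step 2), so $L=\Sigma\times_{\phi}S^{1}$ with $\Sigma$ a compact orientable surface carrying a flat affine structure; Benz\'ecri's theorem forces $\Sigma=T^{2}$, and orientability of $L$ makes $\phi_{*}\in SL_{2}(\mathbb{Z})$, of finite order. The Wang sequence gives $b_{1}(L)=1+\dim\ker(\phi_{*}-\mathrm{Id})$, which is $1$ if $\phi_{*}$ has order $>1$ (a nontrivial torsion element of $SL_{2}(\mathbb{Z})$ has no eigenvalue $1$) and $3$ if $\phi_{*}=\mathrm{Id}$; in either case Poincar\'e duality on the closed orientable $3$-manifold $L$ gives $b_{0}=b_{3}=1$ and $b_{2}=b_{1}$, so every Betti number is odd. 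For the realization, the flat affine structure develops $\widetilde L$ onto an open convex domain $\Omega\subset\R^{3}$ (convexity because the periodic monodromy renders the affine holonomy virtually abelian), and $g_{L}$ lifts to $\nabla^{2}\psi$ for a smooth strictly convex potential $\psi$ on $\Omega$. Exploiting the solvable $T^{2}$-fibred shape of $L$, one writes $\psi$ modulo an affine gauge as $q+\psi_{0}$ with $q$ quadratic and $\psi_{0}$ the potential of the $2$-dimensional exponential family carried by the torus fibre; then $e^{\psi}$ is a bilateral Laplace transform of a positive measure $\nu$, the family $p(x,\theta)=\exp\!\big(\langle\theta,x\rangle-\psi(\theta)\big)\,d\nu(x)$ is exponential, $\pi_{1}(L)$ acts on it by affine reparametrizations, and $L$ is its quotient.

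\textbf{The main obstacle.} The bottleneck — and the point that actually selects the right distribution $\mathcal D$ — is the leafwise flatness $R^{\nabla^{L}}=0$ of Step 1: one must prove ``the curvature of $\nabla$ is transverse to $\mathcal F$'' for a general compact statistical structure, not just for the rank-one model, which may require first reducing the skewness tensor to a canonical rank-one form. A second genuine difficulty is the closing claim of Step 3, since convexity of $\psi$ alone does not make $e^{\psi}$ a Laplace transform — the exponential-family realization must exploit the product structure of the leaf — and, relatedly, Step 2 needs the non-vanishing of the Koszul form of a non-flat compact Hessian manifold, which itself requires justification.
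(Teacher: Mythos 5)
Your Step~1 never actually produces the foliation: you list three candidate distributions and then concede that the decisive property, leafwise flatness of $\nabla$, is ``the bottleneck''. That property is the entire content of the step, and none of your candidates is shown to be involutive, let alone to have $\nabla$-flat leaves. The paper's mechanism is different and makes the flatness automatic: it takes the space $J_\nabla$ of vector fields solving the finite-type equation $\nabla^2X=0$, observes that $J_\nabla$ is a finite-dimensional Lie algebra closed under the product $X\cdot Y=\nabla_XY$, integrates it (compactness of $M$, Palais) to a group action whose orbits are the leaves; the leaves are then $\nabla$-autoparallel, and $R^{\nabla}(X,Y)U$ vanishes along them because it is the antisymmetrization of $\nabla^2_{X,Y}U$ and the fields $U\in J_\nabla$ spanning each leaf satisfy $\nabla^2U=0$. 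Without some such idea your Step~1 is a research programme, not a proof.

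Two later steps fail as written. In Step~2 your dichotomy is on the cohomology class $[\beta]$; but a closed $1$-form with $[\beta]\neq0$ can still have zeros, so Tischler's theorem does not apply in your second case. The correct dichotomy ($\beta\equiv0$ versus $\beta$ nowhere zero) comes from the Shima--Yagi theorem that $\beta$ is parallel for the Levi--Civita connection $D$, whence $\Vert\beta\Vert$ is constant; you flag this gap yourself, but it is essential, not optional. In Step~3 the argument goes off the rails: the Tischler fibre is orthogonal to the $D$-parallel field $Z$, hence totally geodesic for $D$, but there is no reason for it to be $\nabla$-autoparallel, so it does not inherit a flat affine structure and Benz\'ecri does not force it to be $T^2$. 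Worse, your conclusion that $L$ is a $T^2$-bundle over $S^1$ with periodic monodromy is inconsistent with the non-flatness hypothesis: such a manifold is a Seifert manifold with zero Euler number over a flat orbifold and therefore admits a flat metric --- this is precisely how the paper \emph{excludes} that case. The paper instead builds a cosymplectic and then co-K\"ahler structure from $\widetilde\beta$ and $i_{\widetilde Z}\mathrm{Vol}_{g}$, obtains odd $b_1$ from the co-K\"ahler property plus Poincar\'e duality, and invokes Geiges's classification to land on $L\cong\Gamma\backslash(\mathbb{H}^2\times\mathbb{R})$; the exponential family is then exhibited explicitly on the Lorentz cone via the Koszul--Vinberg characteristic function, whose Fisher metric is computed to be the product metric on $\mathbb{H}^2\times\mathbb{R}$. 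Your proposed route via ``$e^{\psi}$ is a Laplace transform'' is, as you yourself note, not a consequence of convexity and is not carried out.
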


\begin{proof}
The proof proceeds in four steps. The first step is presented in the PhD thesis(\cite{gnandi2022classification}), but we have deemed it useful to provide additional details to facilitate understanding for the readers of this paper.

\begin{enumerate}[leftmargin=*,label=\textbf{Step \arabic*}:]
    \item \textbf{Foliations by Hessian manifolds.}
    
    Let $(M, g, \nabla)$ be a statistical manifold. The equation associated with $\nabla$ is defined by:
    \[
    \mathrm{E}(\nabla): \quad \nabla^2 X = 0 \quad \text{for all } X \in \mathfrak{X}(M),
    \label{eq:hessian_eq}
    \]
    where $\nabla^2$ denotes the second covariant derivative, given for all $X, Y, Z \in \mathfrak{X}(M)$ by:
    \[
    \nabla^2_{X,Y} Z := \nabla_X(\nabla_Y Z) - \nabla_{\nabla_X Y} Z.
    \]
    Let $(x^1, \dots, x^m)$ be local coordinates on $M$ and $X \in \mathfrak{X}(M)$. Setting
    \begin{align*}
    \partial_i &= \frac{\partial}{\partial x^i}, & 
    X &= \sum_k X^k \partial_k, & 
    \nabla_{\partial_i} \partial_j &= \sum_k \Gamma^k_{ij} \partial_k,
    \end{align*}
    the princip al symbol of the operator $X \mapsto \nabla^2 X$ takes the form:
    \[
    (\nabla^2 X)(\partial_i, \partial_j) = \sum_k \theta^k_{ij}(X) \partial_k,
    \]
    where
    \begin{align*}
    \theta^l_{ij}(X) &= \frac{\partial^2 X^l}{\partial x^i \partial x^j} + \Gamma^l_{ik} \frac{\partial X^k}{\partial x^j} + \Gamma^l_{jk} \frac{\partial X^k}{\partial x^i} - \Gamma^k_{ij} \frac{\partial X^l}{\partial x^k} \\
    &\quad + \frac{\partial \Gamma^l_{jk}}{\partial x^i} + \Gamma^m_{jk} \Gamma^l_{im} - \Gamma^m_{ij} \Gamma^l_{mk}.
    \end{align*}
    The equation $\mathrm{E}(\nabla)$ thus reduces locally to the system $\theta^l_{ij}(X) = 0$.

    Let $\mathscr{J}_\nabla(M)$ denote the sheaf of solutions to the  equation $\mathrm{E}(\nabla)$ and $J_\nabla = \Gamma(M, \mathscr{J}_\nabla)$ its space of global sections. Define a product on $J_\nabla$ by:
    \[
    X \cdot Y = \nabla_X Y.
    \]
    A direct computation shows:
    \[
    X \cdot (Y \cdot W) - (X \cdot Y) \cdot W = \nabla_X(\nabla_Y W) - \nabla_{\nabla_X Y} W = \nabla^2_{X,Y} W.
    \]
    Hence, for all $W \in J_\nabla$,
    \[
    X \cdot (Y \cdot W) - (X \cdot Y) \cdot W = 0.
    \]
    Now, for $U, V \in J_\nabla$, we have:
    \begin{align*}
    X \cdot (Y \cdot (U \cdot V)) - (X \cdot Y) \cdot (U \cdot V) 
    &= X \cdot ((Y \cdot U) \cdot V) - ((X \cdot Y) \cdot U) \cdot V \\
    &= [X \cdot (Y \cdot U) - (X \cdot Y) \cdot U] \cdot V = 0.
    \end{align*}
    Moreover, $\nabla_U V \in J_\nabla$, since:
    \[
    \nabla^2_{X,Y} (\nabla_U V) = X \cdot (Y \cdot (U \cdot V)) - (X \cdot Y) \cdot (U \cdot V) = 0.
    \]
    Thus, $(J_\nabla, \cdot)$ is an associative algebra. The principal symbol analysis shows that $\mathrm{E}(\nabla)$ is of finite type \cite{guillemin1965integrability,singer1965infinite,kumpera1972lie}, so $J_\nabla$ is finite-dimensional. As $\nabla$ is torsion-free, the bracket
    \[
    [X, Y]_\nabla = \nabla_X Y - \nabla_Y X
    \]
    coincides with the Lie bracket, and $J_\nabla$ is closed under it, forming a finite-dimensional Lie subalgebra of $\mathfrak{X}(M)$. By Lie's third theorem, there exists a unique simply connected Lie group $G_\nabla$ with Lie algebra isomorphic to $J_\nabla$. Since $M$ is compact, $J_\nabla$ is integrable \cite{palais1957global}, yielding a locally effective differentiable action:
    \[
    G_\nabla \times M \to M.
    \]
    
    Let $\mathcal{F}_\nabla$ be the singular foliation defined by the orbits of $G_\nabla$, and $g_\nabla$ the restriction of $g$ to the leaves of $\mathcal{F}_\nabla$. Since $J_\nabla$ is an associative algebra with respect to $\nabla$, each leaf is $\nabla$-autoparallel. Let $\nabla^{\mathcal{F}_\nabla}$ denote the restriction of $\nabla$ to the leaves. A straightforward computation shows that $\nabla^{\mathcal{F}_\nabla}_X g_\nabla$ is totally symmetric. Moreover, for $U \in J_\nabla$, we have
\[
R^{\nabla^{\mathcal{F}_\nabla}}(X,Y)U = \nabla^{\mathcal{F}_\nabla, 2}_{X,Y}U - \nabla^{\mathcal{F}_\nabla, 2}_{Y,X}U = 0 
\quad \text{for all } U \in J_\nabla,
\]
where
\[
\nabla^{\mathcal{F}_\nabla, 2}_{X,Y}Z = \nabla^{\mathcal{F}_\nabla}_X\big(\nabla^{\mathcal{F}_\nabla}_Y Z\big) 
- \nabla^{\mathcal{F}_\nabla}_{\nabla^{\mathcal{F}_\nabla}_X Y} Z.
\]

    Hence, the leaves of $\mathcal{F}_\nabla$ are  dually flat manifolds, and thus Hessian manifolds.
    
    \vspace{0.2cm}
    Then, on any statistical manifold $(M, g, \nabla)$, there always exists a singular foliation 
$\mathcal{F}_\nabla$ whose leaves are Hessian manifolds. A similar construction is available 
for the dual connection $\nabla^*$. This completes the first step of the proof of the theorem.

    \item \textbf{Topology of compact orientable leaves.}
    
    Let $L_\nabla$ be a compact orientable leaf of the singular foliation $\mathcal{F}_\nabla$. By Step~1, the triple $(L_\nabla, g_\nabla, \nabla^{\mathcal{F}_\nabla})$ is a compact orientable Hessian manifold. Let $D$ denote the Levi-Civita connection of $g_\nabla$. The first Koszul form $\beta \in \Omega^1(L_\nabla)$ associated with the statistical structure $(\nabla^{\mathcal{F}_\nabla}, g_\nabla)$ is defined by the relation
    \[
    \nabla^{\mathcal{F}_\nabla}_X (\mathrm{Vol}_{g_\nabla}) = \beta(X) \, \mathrm{Vol}_{g_\nabla}, \quad \text{for all } X \in \mathfrak{X}(L_\nabla),
    \]
    where $\mathrm{Vol}_{g_\nabla}$ is the Riemannian volume form induced by $g_\nabla$ \cite{koszul1961domaines, shima1997geometry}.

    According to Shima and Yagi \cite[Theorem~4.1]{shima1996geometry}, the first Koszul form $\beta$ is $D$-parallel, i.e., $D\beta = 0$. Consequently, its norm $\|\beta\|_{g_\nabla}$ is constant on $L_\nabla$. This leads to two mutually exclusive cases:

    \paragraph{Case 1: Vanishing Koszul form ($\beta = 0$).} 
    By \cite[Theorem~4.2]{shima1996geometry}, the identity $\beta = 0$ implies that the Levi-Civita connection coincides with the flat affine connection: $D = \nabla^{\mathcal{F}_\nabla}$. Hence $(L_\nabla, g_\nabla)$ is a flat Riemannian manifold. Since $L_\nabla$ is compact, the classical theorems of Bieberbach \cite{bieberbach1911bewegungsgruppen,bieberbach1912bewegungsgruppen} and Wolf \cite{Wolf} imply that $L_\nabla$ is isometric to a flat torus:
    \[
    L_\nabla \simeq \mathbb{R}^d / \Lambda,
    \]
    where $d = \dim L_\nabla$ and $\Lambda \subset \mathbb{R}^d$ is a Bieberbach group, acting freely on $\mathbb{R}^{d}$ and $\simeq$
, means “isometric to.” In other words, $L_\nabla$ is finitely covered by a flat torus, such as $\mathbb{T}^d = \mathbb{R}^d / \mathbb{Z}^d$.

    \paragraph{Case 2: Non-vanishing Koszul form ($\beta \neq 0$).} 
    In this case, $\beta$ is a non-vanishing $D$-parallel $1$-form. Define the vector field $Z$ dual to $\beta$ via the metric: $\beta = g_\nabla(Z, \cdot)$. Since $D\beta = 0$, it follows that $DZ = 0$, i.e., $Z$ is a parallel vector field on $L_\nabla$.

    By Welsh's classification \cite[Theorem~1]{welsh1986manifolds}, the existence of a non-trivial parallel vector field on a compact Riemannian manifold implies that $L_\nabla$ is a fiber bundle over $\mathbb{S}^1$ with finite structure group denoted $G$. Furthermore, the results of Tischler \cite{tischler1970fibering} imply that 
$L_\nabla$ is diffeomorphic to a mapping torus:
\[
L_\nabla \cong X_\varphi = \frac{X \times [0,1]}{(p,0) \sim (\varphi(p),1)},
\]
with the associated fibre bundle 
\[
X \rightarrow L_\nabla = X_\varphi \rightarrow \mathbb{S}^{1},
\]
where $\varphi \in \mathrm{Diff}(X)$.
By \cite[Proposition 6.4]{bazzoni2014structure}, the structural group $G$
generates a finite cyclic subgroup $\langle \varphi \rangle \subset \mathrm{Diff}(X)$.
From \cite{welsh1986manifolds}, we deduce that $\varphi$ is of finite order,
and in particular periodic. This concludes the second step.

    \item \textbf{Three-dimensional non-flat leaves.}
    
    Now assume that the leaf $L_\nabla$ is 3-dimensional and non-flat. In this case, the first Koszul form $\beta$ is a non-vanishing closed 1-form. Define the corresponding non-vanishing vector field $Z$ by $\beta = g_\nabla(Z, \cdot)$. Since $D\beta = 0$, it follows that $DZ = 0$, and consequently $g_\nabla(Z,Z) = c > 0$ is constant. 

    Define the normalized vector field and 1-form by:
    \[
    \widetilde{Z} := \frac{Z}{\sqrt{c}}, \qquad \widetilde{\beta} := \frac{\beta}{\sqrt{c}}.
    \]
    Then we have:
    \[
    \widetilde{\beta}(\widetilde{Z}) = 1, \quad D\widetilde{Z} = 0, \quad \mathcal{L}_{\widetilde{Z}} g_\nabla = 0, \quad g_\nabla(\widetilde{Z},\widetilde{Z}) = 1.
    \]

    Define a 2-form by:
    \[
    \Omega = i_{\widetilde{Z}} \mathrm{Vol}_{g_\nabla},
    \]
    where $\mathrm{Vol}_{g_\nabla}$ denotes the Riemannian volume form. A straightforward computation shows:
    \[
    d\Omega = d(i_{\widetilde{Z}} \mathrm{Vol}_{g_\nabla}) = \mathcal{L}_{\widetilde{Z}} \mathrm{Vol}_{g_\nabla} = 0.
    \]

    Observe that:
    \[
    i_{\widetilde{Z}}(\widetilde{\beta} \wedge \mathrm{Vol}_{g_\nabla}) = 0,
    \]
    which immediately yields:
    \[
    \mathrm{Vol}_{g_\nabla} = \widetilde{\beta} \wedge \Omega.
    \]

    We thus deduce that $\widetilde{Z}$ is the Reeb vector field associated with the cosymplectic structure $(\widetilde{\beta},\Omega)$ (see \cite{li2008topology}). Since $\widetilde{Z}$ is a Killing vector field on $L_\nabla$, it follows from~\cite{bazzoni2015k} that $(L_\nabla,\widetilde{\beta},\Omega)$ is a K-cosymplectic manifold. By~\cite[Proposition~2.8]{bazzoni2015k}, $L_\nabla$ admits an K-cosymplectic structure $(\widetilde{\beta},\widetilde{Z},\phi,h)$ with $\mathcal{L}_{\widetilde{Z}} h = 0$. 

    From Goldberg~\cite[Proposition~3]{goldberg1969integrability}, the structure $(\widetilde{\beta},\widetilde{Z},\phi,h)$ is normal (i.e., $N_\phi = 0$); hence $(L_\nabla, \widetilde{\beta}, \widetilde{Z}, \phi, h)$ is a co-Kähler manifold (Kähler-mapping torus \cite{li2008topology}). Furthermore, by \cite[Theorem~11]{Chinea1993TopologyOC}, the first Betti number $b_1(L_\nabla)$ is odd. Applying Poincaré duality, it follows that all Betti numbers $b_s(L_\nabla)$ are odd.

    From \cite{cornu1987varietes} (Proposition 2), $(L_\nabla,\widetilde{\beta},\Omega)$ cannot be a contact manifold. Then from \cite{geiges1997normal} (Theorems 1 and 2), we deduce that $L_\nabla$ is diffeomorphic to one of the following:
    \begin{enumerate}[]
        \item $\Lambda \backslash (\mathbb{H}^2 \times \mathbb{R})$ with $\Lambda \subset \mathrm{Isom}_0(\mathbb{H}^2 \times \mathbb{R})$,
        \item $\mathbb{T}^2$-bundles over $\mathbb{S}^1$ with periodic monodromy,
        \item $\mathbb{S}^2 \times \mathbb{S}^1$.
    \end{enumerate}

    Since $(L_\nabla, g_\nabla, \nabla^{\mathcal{F}_\nabla})$ is a Hessian manifold, Shima~\cite{shima1981hessian} shows that $\mathbb{S}^2 \times \mathbb{S}^1$ does not admit a Hessian metric, so this case is excluded.

    Now assume $L_\nabla$ is a $T^2$-bundle over $\mathbb{S}^1$ with periodic monodromy. From \cite{martelli2016introduction} (Exercise 11.4.13), we deduce that $L_\nabla = \mathbb{T}^2_A$, where $A \in \mathrm{SL}_2(\mathbb{Z})$ with $A = \pm I$ or $|\mathrm{tr}(A)| < 2$. Then from \cite{martelli2016introduction} (Proposition 11.4.14), $L_\nabla$ is a Seifert manifold with Euler number zero and whose base orbifold has zero Euler characteristic. By \cite{martelli2016introduction} (Theorem 12.3.1), $L_\nabla$ admits a flat metric. Since $L_\nabla$ is non-flat by assumption, this case is also excluded.

    Therefore, the only remaining possibility is:
    \[
    L_\nabla \cong \frac{\mathbb{H}^2 \times \mathbb{R}}{\Gamma} \quad \text{with} \quad \Gamma \subset \mathrm{Isom}_0(\mathbb{H}^2 \times \mathbb{R})\cong \mathrm{Isom}_0(\mathbb{H}^2)\times \mathrm{Isom}_0(\mathbb{R}).
    \]

    \item \textbf{Exponential family structure.}

    \vspace{0.2cm}
    Following \cite{molnar1997projective}, we identify the manifold $\mathbb{H}^2 \times \mathbb{R}$ 
      with the Lorentz cone ( the positive light-cone)
    \[
    \Theta = \left\{(x,y,z) \in \mathbb{R}^3 : z > 0,\ z^2 > x^2 + y^2 \right\}.\]
By \cite[Example 4.2]{shima2007geometry}, $\Theta$ is a regular self-dual convex cone. Then for $\theta = (x,y,z) \in \Theta$ and $\theta^{*} = (\xi,\eta,\zeta) \in \Theta$, consider the 
exponential family on $\Theta$ defined by
\[
P((x,y,z),(\xi,\eta,\zeta)) = \frac{e^{-(x\xi + y\eta + z\zeta)}}{\chi(x,y,z)},
\]
where $\chi$ denotes the Koszul-Vinberg characteristic function (\cite{vinberg1967theory, shima2007geometry, koszul1968deformations,koszul1961domaines,koszul1962ouverts, barbaresco2014koszul,faraut1994analysis,satake2014algebraic,vesentini1976invariant}):

\[
\chi(x,y,z) = 
\iiint\limits_{\substack{\zeta^2 > \xi^2 + \eta^2 \\[2pt] \zeta > 0}}
\exp\!\left[-(\xi x + \eta y + \zeta z)\right]
\, d\xi\, d\eta\, d\zeta.
\]

Let $q = z^2 - x^2 - y^2$.  
From \cite{shima2007geometry, faraut1994analysis}, we obtain the explicit expression
\[
\chi(x,y,z) = q^{-3/2} \chi(0,0,1),
\]
which yields the probability density

\begin{equation}
\label{thm:prob}
  P((x,y,z),(\xi,\eta,\zeta)) 
= \exp\left( -(x\xi + y\eta + z\zeta)
+ \frac{3}{2} \log q
- \log \chi(0,0,1) \right).  
\end{equation}

Thus, $\{P(\theta,\theta^*) : \theta \in \Theta\}$ forms an exponential family 
parametrized by $\Theta$. Its Fisher metric is given by
\[
g^{F}_{ij}(\theta) 
= \mathbb{E} \left[
\frac{\partial \log P}{\partial \theta^i}
\frac{\partial \log P}{\partial \theta^j}
\right], 
\quad \theta \in \Theta.
\]

A direct computation shows that
\[
g^{F} = -\frac{3}{2}\, (\nabla^{0})^{2} \log q,
\]
where $\nabla^0$ is the Levi--Civita connection of $\mathbb{R}^3$.
Hence, $(\Theta,\nabla^0,g^F)$ is a Hessian manifold and $g^F$ is a Koszul Hessian metric.
In coordinates, the metric is

\begin{equation*}
\begin{split}
g^F = \frac{3}{q^2} \Big[ &
(z^{2}+x^{2}+y^{2})\,dz^{2} - 4zx\,dz\,dx - 4zy\,dz\,dy \\
& + (z^{2}+x^{2}-y^{2})\,dx^{2} 
+ 4xy\,dx\,dy + (z^{2}-x^{2}+y^{2})\,dy^{2}
\Big].
\end{split}
\end{equation*}

Introducing cylindrical coordinates $(r,\alpha,t)$ by
\[
\begin{cases}
z = e^{t} \cosh r, \\
x = e^{t} \sinh r \cos \alpha, \\
y = e^{t} \sinh r \sin \alpha,
\end{cases}
\quad t \in \mathbb{R},\ r \ge 0,\ \alpha \in [0,2\pi),
\]
the metric simplifies to
\[
g = 3(dt^2 + dr^2 + \sinh^2 r \, d\alpha^2).
\]
Here $(r,\alpha)$ are polar coordinates on $\mathbb{H}^2$ and $t$ is the coordinate $\mathbb{R}$.
Thus, $(\mathbb{H}^2 \times \mathbb{R}, g)$ is isometric to $(\Theta, g^F)$. We deduce that, the leaf $L_\nabla$ of the associated foliation $\mathcal{F}_\nabla$ satisfies
\[
L_\nabla \cong \frac{\Theta}{\Lambda},
\qquad \Lambda \subset \mathrm{Isom}_0(\Theta).
\]

Identifying $\theta \in \Theta$ with the probability distribution $P_\theta$,
we obtain
\[
L_\nabla \cong 
 \frac{\{P_\theta : \theta \in \Theta\}}{\Lambda} \,
\qquad 
\Lambda \subset 
\mathrm{Isom}_0(\{P_\theta : \theta \in \Theta\}),
\]
where $\{P_\theta : \theta \in \Theta\}$ is an exponential family defined in \ref{thm:prob}.
\text{This completes the proof.}
\end{enumerate}    
\end{proof}
\begin{rem}
For the reader familiar with the  geometries of $3$-manifolds in the sense of Thurston, the leaf  
\[
L_\nabla \cong \frac{\mathbb{H}^2 \times \mathbb{R}}{\Gamma},
\qquad \Gamma \subset \mathrm{Isom}_0(\mathbb{H}^2 \times \mathbb{R}),
\]
is precisely the Seifert manifold with Euler number $0$ and hyperbolic orbifold base $B$
(since $\chi_{\mathrm{orb}}(B) < 0$). In other words, $L_\nabla$ admits 
a $\mathbb{H}^2 \times \mathbb{R}$-geometry. From \cite[Prop.~10.3.26]{martelli2016introduction}, $L_\nabla$ is covered by 
$\Sigma_g \times \mathbb{S}^1$ for some $g > 1$. Hence
\[
L_\nabla \cong \frac{\Sigma_g \times \mathbb{S}^1}{\Pi},
\]
where $\Pi$ is a finite group acting freely and properly on $\Sigma_g \times \mathbb{S}^1$. Since $\Sigma_g$ admits an $\mathbb{RP}^2$-structure 
(see \cite[p.~243]{benzecri1960varietes} and \cite[Cor.~6.5.2]{goldman2022geometric}), 
the universal cover of $\Sigma_g \times \mathbb{S}^1$, namely $\mathbb{H}^2 \times \mathbb{R}$, 
develops into the Lorentz cone $\Theta$. Consequently,
\[
\Sigma_g \times \mathbb{S}^1 \cong \frac{\Theta}{\Gamma \times \langle \lambda \rangle},
\qquad \Gamma \subset \mathrm{SO}(2,1),\quad \lambda > 1.
\]
We deduce that
\[ L_\nabla \cong \Pi \backslash \bigl( (\Gamma \times \langle \lambda \rangle) \backslash \Theta \bigr). \]

Finally, by using the Koszul-Vinberg characteristic function $\chi$ on the sharp convex cone $\Theta$,
and arguing as above, the result follows.
\end{rem}

\begin{disc}
 In this work, we focused primarily on the $3$-dimensional leaves of singular foliations, which we showed can be realized as quotients of exponential families under certain conditions. A natural open question remains: what occurs for leaves of higher dimensions? A deeper understanding of their geometric and topological structure could open new avenues in information geometry and statistics. Furthermore, we have mainly considered classical exponential families; extending our approach to other classes of statistical models represents a promising direction for future research.
\end{disc}

\bibliographystyle{splncs04}
\bibliography{main.bib}
\end{document}